\newtheorem{theorem}{Theorem}[section]
\newtheorem{lemma}[theorem]{Lemma}
\newtheorem{corollary}[theorem]{Corollary}
\newcommand\calC{\mathcal{C}}
\newcommand\calD{\mathcal{D}}
\newcommand\calM{\mathcal{M}}
\newcommand\calG{\mathcal{I}}
\newcommand\calJ{\mathcal{J}}
\newcommand\calK{\mathcal{K}}
\newcommand\Acyclic{{\mathcal{A}}}
\newcommand\Ohkawa{{\mathcal{T}}}
\newcommand\Trivializer{{T}}
\newcommand\colim{{\rm colim}}
\newcommand\Ho{{\rm Ho}}
\newcommand\Ch{{\rm Ch}}
\newcommand\ZZ{\mathbb{Z}}
\newcommand\NN{\mathbb{N}}
\newcommand\SH{{\mathcal{SH}}(S)}
\newcommand\Mot{{\mathcal M}}
\date{\today}
\numberwithin{equation}{section}
\begin{document}
\title{A generalization of Ohkawa's theorem}
\author[C. Casacuberta]{Carles Casacuberta}
\address{Institut de Matem\`atica, Universitat de Barcelona, Gran Via de les Corts Catalanes 585, 08007 Barcelona, Spain}
\email{carles.casacuberta@ub.edu}
\author[Javier J. Guti\'errez]{Javier J. Guti\'errez}
\address{Department of Algebra and Topology,
Radboud Universiteit \newline Nijmegen, Heyendaalseweg 135,
6525 AJ Nijmegen, The Netherlands}
\email{j.gutierrez@math.ru.nl}
\author[J. Rosick\'{y}]{Ji\v{r}\'{\i} Rosick\'{y}}
\address{Department of Mathematics and Statistics, Masaryk University, Faculty of Science, Kotl\'{a}\v{r}sk\'{a}~2, 61137 Brno, Czech Republic}
\email{rosicky@math.muni.cz}
\thanks{The two first-named authors were supported by the Spanish Ministry of Economy and Competitiveness under grant MTM2010-15831 and by the Generalitat de Catalunya as members of the team 2009~SGR~119. 
The third-named author was supported by GA\v{C}R 201/11/0528.}

\begin{abstract}
A theorem due to Ohkawa states that the collection of Bousfield
equivalence classes of spectra is a set.
We extend this result to arbitrary combinatorial model categories.
\end{abstract}
\keywords{Homotopy, model category, acyclicity, Bousfield class}
\subjclass[2010]{55N20, 55P42}

\maketitle

\section*{Introduction}
Ohkawa proved in \cite{O} that the homotopy category of spectra has only a set 
(that is, not a proper class) of distinct homological acyclic classes. The \emph{homological acyclic class} or \emph{Bousfield class} $\langle E\rangle$ of a spectrum $E$ consists of all $E_*$\nobreakdash-acyclic spectra, where $E_*$ is the reduced homology theory represented by~$E$. In other words, $\langle E\rangle$ is the collection of spectra $X$ such that $E\wedge X=0$ in the homotopy category. 
The original source of this terminology is~\cite{B}.

Bousfield classes are closely related with localizations. The earliest form of localization in homotopy theory \cite{Su} was a technique to split homotopy types into their $p$\nobreakdash-primary components for all primes~$p$, thereby introducing the use of Hasse-principle methods in topology, both for spaces and for spectra. A~decade later, it was discovered that every $p$\nobreakdash-local spectrum could be further resolved into \emph{$v_n$\nobreakdash-periodic} components for $n\ge 0$. The resulting \emph{chromatic towers} and their associated spectral sequences became major tools to compute stable homotopy groups~\cite{R}.

All these are special cases of homological localizations. For each reduced homology theory $E_*$ defined on spaces or spectra there is an \emph{$E_*$\nobreakdash-localization functor} \cite{B}, which transforms the $E_*$\nobreakdash-equivalences 
(that is, maps $X\to Y$ inducing isomorphisms $E_k(X)\cong E_k(Y)$ for all~$k$) into homotopy equivalences in a universal way.
Localization at a prime $p$ is obtained by letting $E_*$ be ordinary homology with $p$\nobreakdash-local coefficients, and the $n$th stage of the chromatic resolution is $E(n)_*$\nobreakdash-localization, where $E(n)=K(0)\vee\cdots\vee K(n)$ is a wedge of Morava $K$\nobreakdash-theories~\cite{JW}.

Two spectra $E$ and $F$ are called \emph{Bousfield equivalent} if $E_*$\nobreakdash-local\-iza\-tion is equivalent to $F_*$\nobreakdash-local\-iza\-tion. This happens precisely when the classes of $E_*$\nobreakdash-acyclic spectra and $F_*$\nobreakdash-acylic spectra coincide, that is, when the Bousfield classes $\langle E\rangle$ and $\langle F\rangle$ are identical.
Thus, according to Ohkawa's theorem, Bousfield equivalence classes of spectra form a set. A~shorter proof of this fact was given by Dwyer and Palmieri in~\cite{DP}, and some consequences were described in~\cite{HP}.

In a different direction, Neeman proved in \cite{N} that Bousfield classes form a set
in the derived category of any commutative Noetherian ring.
In this context, the Bousfield class of a chain complex $A$ is defined as the collection
of chain complexes $X$ such that the derived tensor product $A\otimes X$ is zero.
Dwyer and Palmieri proved the same result in \cite{DP2} for the derived category of a truncated polynomial ring on countably many generators
over a countable field. 
They asked in \cite[Question~5.9]{DP2} if Ohkawa's theorem
is in fact true in the derived category of every commutative ring. This was answered
in the affirmative by Stevenson in \cite{S} and by Iyengar and Krause in~\cite{IK}, and it also follows from the results of the present article.

Both the homotopy category of spectra and the derived category
of a commutative ring are homotopy categories of \emph{combinatorial model categories}, and their tensor product comes
from a closed monoidal structure in the model category. In this article we prove that the collection of Bousfield classes is a set under these general assumptions.
This extends the validity of Ohkawa's theorem, for example, to categories of motivic spaces or motivic spectra over any base scheme~\cite{MV}, and to categories of modules over (ordinary or motivic) ring spectra. Thus, Okhawa's theorem also holds in the derived category of motives over any field $k$ of characteristic zero, since these are modules over a motivic Eilenberg--Mac\,Lane spectrum \cite{RO2}.

Specifically, we show that in every combinatorial model category $\calM$ (neither necessarily stable nor pointed), for every sufficiently large regular cardinal $\lambda$
there is only a set of distinct acyclic classes $\Acyclic(H)$ for functors $H\colon\calM\to\calM$ preserving $\lambda$\nobreakdash-filtered colimits and such that the terminal object of $\calM$ is $H$\nobreakdash-acyclic. 
An object $X$ of $\calM$ is called \emph{$H$\nobreakdash-acyclic} if $HX$
is weakly equivalent to the terminal object, and we denote by $\Acyclic(H)$ the collection of all $H$\nobreakdash-acyclic objects.
If a model category $\calM$ is closed monoidal, combinatorial and pointed, then, since left adjoints preserve all co\-limits
and there are cofibrant replacement functors on $\calM$ preserving $\lambda$\nobreakdash-filtered colimits for sufficiently 
large~$\lambda$, it follows that Bousfield classes in the homotopy category of $\calM$ form a~set.

In contrast with this fact, in the derived category of $\ZZ$ or in the homotopy category of spectra there is a proper class of distinct acyclic classes for nullification functors; see \cite[\S\,8]{St} for terminology and details. Each nullification functor $P_A$ preserves $\lambda$\nobreakdash-filtered colimits for $\lambda$ big enough, although the size of $\lambda$ increases with~$A$.

Our method of proof of Ohkawa's theorem for combinatorial model categories generalizes the argument given in~\cite{DP}.  
A similar argument was used in \cite{S} for compactly generated tensor triangulated categories.
Using a different approach, it was shown in~\cite[Theorem~3.1]{IK}
that every well generated tensor triangulated category has only a set of Bousfield classes.  
This result is consistent with the fact that
homotopy categories of stable combinatorial model categories are well generated.

Nevertheless, we emphasize that Ohkawa's theorem is by far not exclusively a result about triangulated categories.
For example, Corollary~\ref{discrete} below implies that there is only a set of
homological acyclic classes of simplicial sets or motivic spaces for every base scheme, and our proof just relies on the fact that these categories are locally presentable and  homology functors preserve filtered colimits. 

\bigskip

\noindent
\textbf{Acknowledgements}
We are indebted to Fernando Muro for frequent
exchanges of views on this topic, which made us rethink earlier versions of the article. Corollary~3.7 was kindly pointed out by Paul Arne {\O}stv{\ae}r.
We also appreciate input from George Raptis and Greg Stevenson.

\section{Combinatorial model categories}
\label{prelims}

We assume that regular cardinals are infinite.
For a regular cardinal~$\lambda$,
a small category $\calK$ is \emph{$\lambda$\nobreakdash-filtered} if it is nonempty and,
given any set of objects $\{k_i\mid i\in I\}$ where $|I|<\lambda$, there is an object $k$ and a
morphism $k_i\to k$ for each $i\in I$, and, moreover, given any set of parallel arrows
between two fixed objects $\{\alpha_j\colon k\to k' \mid j\in J\}$ where $|J|<\lambda$, there is a morphism
$\gamma\colon k'\to k''$ such that $\gamma\circ\alpha_j$ is the same morphism for all $j\in J$.

An object $X$ of a category $\calC$ is \emph{$\lambda$\nobreakdash-presentable}
if the functor $\calC(X,-)$ from $\calC$ to sets preserves $\lambda$\nobreakdash-filtered colimits.
A~cocomplete category $\calC$ is \emph{locally $\lambda$\nobreakdash-pres\-ent\-able} if the collection of isomorphism classes of $\lambda$\nobreakdash-presentable objects is a set and every object of $\calC$ is a $\lambda$\nobreakdash-filtered colimit of $\lambda$\nobreakdash-presentable objects.
A~category is called \emph{locally presentable} if it is locally $\lambda$\nobreakdash-presentable for
some regular cardinal~$\lambda$.
See \cite[Section~1.B]{AR}, \cite{GU} or \cite{MP} for further information about locally presentable categories.

The essentials of Quillen model categories can be found in \cite{H} or~\cite{Q}.
A model category is \emph{pointed} if it has a zero object, i.e., 
if the initial object and the terminal object are isomorphic.

A~model category $\calM$ is called \emph{combinatorial} if it is cofibrantly generated~\cite{Hi, H} and the underlying category is locally presentable.
Dugger proved in \cite{D} that a model category is combinatorial if and only if it is Quillen equivalent to a left Bousfield localization of a category of diagrams of simplicial sets equipped with the projective model structure. Hence, many model categories of interest in various contexts are combinatorial. Examples relevant to the present article are pointed or unpointed simplicial sets, pointed or unpointed motivic spaces \cite{DRO, MV}, symmetric spectra over simplicial sets \cite[\S\,3.4]{HSS} or over motivic spaces~\cite{J}, module spectra over a ring spectrum \cite[Theorem~4.1]{SS}, and bounded or unbounded chain complexes of modules over a ring \cite[\S\,2.3]{H}.

\begin{lemma}
\label{suff}
If $\calM$ is a combinatorial model category, then for every ordinal $\alpha$ there is a regular cardinal $\lambda>\alpha$ with the following properties:
\begin{itemize}
\item[{\rm (i)}] $\calM$ is locally $\lambda$\nobreakdash-presentable;
\item[{\rm (ii)}] there are sets of generating cofibrations and generating trivial cofibrations in $\calM$ whose domains and codomains are $\lambda$\nobreakdash-pres\-ent\-able;
\item[{\rm (iii)}] there are fibrant and cofibrant replacement functors on $\calM$ that preserve $\lambda$\nobreakdash-filtered colimits;
\item[{\rm (iv)}] the terminal object of $\calM$ is $\lambda$\nobreakdash-presentable.
\end{itemize}
\end{lemma}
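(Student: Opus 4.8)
The plan is to choose a single regular cardinal $\lambda>\alpha$ large enough to witness all four properties simultaneously, and to obtain the more delicate condition (iii) as a formal consequence of (i), (ii) and the regularity of~$\lambda$ by means of the small object argument. I would first dispose of the bookkeeping. Since $\calM$ is combinatorial it is locally $\mu$\nobreakdash-presentable for some~$\mu$, and a locally $\mu$\nobreakdash-presentable category is locally $\lambda$\nobreakdash-presentable whenever $\mu$ is sharply smaller than~$\lambda$ (see \cite[Section~2]{AR}); the cardinals with this property are cofinal, so (i) holds for arbitrarily large~$\lambda$. Because $\calM$ is cofibrantly generated we may fix sets $I$ and $J$ of generating cofibrations and generating trivial cofibrations. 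As $I$ and $J$ are sets, their domains and codomains form a set of objects, each presentable at some rank; since presentability is preserved under enlarging the cardinal, (ii) holds for every regular $\lambda$ exceeding the supremum of these ranks, and likewise (iv) holds once $\lambda$ exceeds the rank of the terminal object~$1$. Intersecting a cofinal class with finitely many upward-closed tails again yields a cofinal class, so a regular $\lambda>\alpha$ satisfying (i), (ii) and (iv) exists.

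For such a $\lambda$ I would produce the replacement functors of (iii) by running the small object argument for $I$ (for cofibrant replacement, factoring $\emptyset\to X$) and for $J$ (for fibrant replacement, factoring the canonical map $X\to 1$ to the terminal object), in each case as a transfinite sequence $Z_0\to Z_1\to\cdots$ of length exactly~$\lambda$. Because every domain in $I$ and $J$ is $\lambda$\nobreakdash-presentable and $\lambda$ is regular, the sequence stabilizes at stage~$\lambda$, in the sense that $Z_\lambda X\to X$ acquires the right lifting property against~$I$ and is therefore a trivial fibration out of an $I$\nobreakdash-cell object (and symmetrically for~$J$). Hence $Z_\lambda$ is a functorial cofibrant, respectively fibrant, replacement. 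The point that matters below is that the length~$\lambda$ is uniform in~$X$, which makes the construction functorial in the strong sense required.

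The heart of the argument, and the step I expect to be the main obstacle, is to show that each stage $Z_\beta$ preserves $\lambda$\nobreakdash-filtered colimits, which I would prove by transfinite induction on $\beta\le\lambda$. At a limit stage $Z_\beta=\colim_{\gamma<\beta}Z_\gamma$ this is immediate, since colimits commute with colimits. At a successor stage, $Z_{\beta+1}X$ is the pushout of $Z_\beta X\leftarrow\coprod_{S(X)}A\to\coprod_{S(X)}B$, where $S(X)$ indexes the lifting problems of the maps $A\to B$ of $I$ (respectively~$J$) against $Z_\beta X\to X$ (respectively $Z_\beta X\to 1$). The delicate part is the $X$\nobreakdash-dependence of the indexing set~$S(X)$. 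For a $\lambda$\nobreakdash-filtered colimit $X=\colim_i X_i$, the $\lambda$\nobreakdash-presentability of the domains and codomains, together with the inductive hypothesis on $Z_\beta$, identifies each $S(X)$ as a finite limit of hom-sets, which commutes with the filtered colimit, giving $S(X)\cong\colim_i S(X_i)$ in sets. Since each copower functor $T\mapsto T\cdot A$ from sets to $\calM$ is a left adjoint and hence preserves all colimits, both $\coprod_{S(X)}A$ and $\coprod_{S(X)}B$ are $\lambda$\nobreakdash-filtered colimits over~$i$, and the pushout, being a colimit, commutes with them. This yields $Z_{\beta+1}(\colim_i X_i)\cong\colim_i Z_{\beta+1}(X_i)$ and closes the induction; setting $\beta=\lambda$ shows that the replacement functors preserve $\lambda$\nobreakdash-filtered colimits, which is~(iii).
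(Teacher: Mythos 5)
Your proof is correct, and its skeleton coincides with the paper's: first choose a single regular cardinal $\lambda>\alpha$ that handles the bookkeeping items (i), (ii) and (iv) by raising presentability ranks, then obtain (iii) from the small object argument run with sequences of length~$\lambda$. The only real difference is where the work is done. The paper dispatches (iii) with a citation, stating that it is a consequence of (i) and (ii) as shown in \cite[\S 7]{D} or \cite[\S 3]{R1}; your transfinite induction --- identifying the set of lifting problems at each successor stage as a pullback (hence a finite limit) of hom-sets, which commutes with the $\lambda$-filtered colimit by the inductive hypothesis together with the $\lambda$-presentability of the domains and codomains of the generating maps, and then commuting the pushout and the limit-stage colimits past the filtered colimit --- is precisely the content of those references, so you have inlined the cited argument rather than found a different one. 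What this buys is self-containedness; what it costs is length, plus the care needed at the successor step (the $X$-dependence of the indexing set), which you handle correctly. One minor point: for (i) you invoke the sharply-smaller relation $\mu\lhd\lambda$ and a cofinality argument, which is the caution appropriate to accessible categories; for locally presentable categories the paper's citation \cite[Theorem~1.20]{AR} gives the stronger statement that every regular $\lambda\ge\mu$ works, so your extra care is harmless but unnecessary.
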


\begin{proof}
Take first a regular cardinal $\mu>\alpha$
such that $\calM$ is locally $\mu$\nobreakdash-presentable. This is possible since, by \cite[Theorem~1.20]{AR}, 
if $\calM$ is locally $\nu$\nobreakdash-presentable and $\nu'\ge\nu$ then $\calM$ is also locally $\nu'$\nobreakdash-presentable. 
Next, pick a set $\calG$ of generating cofibrations and a set $\calJ$ of generating trivial cofibrations in $\calM$
and choose a regular cardinal $\lambda\ge\mu$ big enough so that all the domains and codomains
of morphisms in $\calG$ and $\calJ$ are $\lambda$\nobreakdash-presentable, and such that the terminal object of $\calM$
is $\lambda$\nobreakdash-presentable as well.
Such a choice is possible by \cite[Proposition~1.16 and Remark~1.30(1)]{AR}.
Finally, (iii) is a consequence of (i) and~(ii), as shown in \cite[\S 7]{D} or \cite[\S 3]{R1}.
\end{proof}

For a combinatorial model category $\calM$ and a sufficiently big regular cardinal~$\lambda$ (as provided by Lemma~\ref{suff}), we use the term \emph{$\lambda$\nobreakdash-combinatorial structure} on $\calM$ to designate a choice of the following items:
a set $\calM_{\lambda}$ of representatives of isomorphism classes of $\lambda$\nobreakdash-presentable objects, including the terminal object, such that every object of $\calM$ is a $\lambda$\nobreakdash-filtered colimit of objects in~$\calM_{\lambda}$;
a set $\calG$ of generating cofibrations and a set $\calJ$ of generating trivial cofibrations whose domains and codomains are in~$\calM_{\lambda}$; and a fibrant replacement functor and a cofibrant replacement functor both preserving $\lambda$\nobreakdash-filtered colimits.

Suppose that a category $\calC$ is locally $\lambda$\nobreakdash-presentable and its terminal object is $\lambda$\nobreakdash-presentable. Then, if we endow $\calC$ with the \emph{discrete} model structure, where the weak equivalences are the isomorphisms
and all morphisms are fibrations and cofibrations, the resulting model category has a $\lambda$\nobreakdash-combinatorial structure where the set $\calG$ of generating cofibrations is the set of all morphisms between members of the chosen set $\calC_{\lambda}$; cf.\ \cite[Example~4.6]{R2}. Recall that locally presentable categories are cocomplete by definition and they are also complete by \cite[Corollary~1.28]{AR}.

The condition that the terminal object be $\lambda$\nobreakdash-presentable
holds automatically when it is a zero object, but may fail otherwise, as exemplified by the category  ${\rm Set}^I$ of $I$\nobreakdash-sorted sets (i.e., functors $I\to{\rm Set}$), where $I$ is any infinite set. This category is locally $\aleph_0$\nobreakdash-presentable by \cite[Corollary~1.54]{AR}, yet its terminal object is not $\aleph_0$\nobreakdash-presentable.

\section{Main result}
\label{results}

Let $\calM$ be a combinatorial model category and suppose given a $\lambda$\nobreakdash-combinatorial structure on it for a suitable regular cardinal~$\lambda$.
Recall that, if $\calG$ is the given set of generating cofibrations, then a morphism $f\colon X\to Y$ is a trivial fibration in $\calM$ if and only if it has the right
lifting property with respect to all the morphisms in~$\calG$. 

An object $X$ of $\calM$ is called \emph{contractible} if the unique morphism 
from $X$ to the terminal object $*$ is a weak equivalence.
For a functor $H\colon\calM\to\calM$, an object $X$ is called \emph{$H$\nobreakdash-acyclic} if $HX$ is contractible.
We denote by $\Acyclic(H)$ the collection of all $H$\nobreakdash-acyclic objects in~$\calM$.

Given a functor $H\colon\calM\to\calM$ and a
triple $(\sigma,A,f)$ where $\sigma\colon P\to Q$ is in $\calG$ and 
\[
f\colon P\longrightarrow RHA
\] 
is a morphism with $A\in\calM_{\lambda}$, where $R$ is the given fibrant replacement functor, we denote by $\Trivializer_H(\sigma,A,f)$ the set of all 
morphisms $t\colon A\to B$ with $B\in\calM_{\lambda}$ for which there exists a morphism $g\colon Q\to RHB$ such that
$RHt\circ f=g\circ\sigma$:
\[
\xymatrix{
P\ar[d]^-{\sigma} \ar[r]^-{f} & RHA \ar[rr]^{RHt} & & RHB. \\
Q \ar@{.>}[urrr]_g
}
\]
Note that, since the terminal object $*$ is in $\calM_{\lambda}$, if $H(*)$ is contractible then the morphism $A\to *$ is in $\Trivializer_H(\sigma,A,f)$
for every $(\sigma,A,f)$.

Finally, let $\Ohkawa(H)$ denote the set whose elements are all the distinct sets
$\Trivializer_H(\sigma,A,f)$ with $A\in\calM_{\lambda}$, $\sigma\colon P\to Q$ in $\calG$, and $f\colon P\to RHA$. 

\begin{theorem}
\label{mainthm}
Suppose given a $\lambda$\nobreakdash-combinatorial structure on a model category $\calM$ for a regular cardinal~$\lambda$. Let $H_1$ and $H_2$ be endofunctors of $\calM$ that preserve $\lambda$\nobreakdash-filtered colimits. Then, if $\Ohkawa(H_2)\subseteq\Ohkawa(H_1)$ and the terminal object of $\calM$ is $H_2$\nobreakdash-acyclic, it follows that $\Acyclic(H_1)\subseteq\Acyclic(H_2)$.
\end{theorem}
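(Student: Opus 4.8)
The plan is to reduce $H$\nobreakdash-acyclicity to a lifting condition against the generating cofibrations and to detect that condition at the level of $\lambda$\nobreakdash-presentable objects. For an endofunctor $H$ preserving $\lambda$\nobreakdash-filtered colimits, an object $X$ is $H$\nobreakdash-acyclic if and only if $RHX$ is contractible; since $RHX$ is fibrant, this holds if and only if $RHX\to *$ is a trivial fibration, i.e.\ if and only if for every $\sigma\colon P\to Q$ in $\calG$ each morphism $P\to RHX$ extends along~$\sigma$. I would then write $X=\colim_{i\in\calK}A_i$ as a $\lambda$\nobreakdash-filtered colimit of objects $A_i\in\calM_{\lambda}$. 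Because $H$ and the fibrant replacement functor $R$ both preserve $\lambda$\nobreakdash-filtered colimits, so does $RH$, whence $RHX=\colim_{i}RHA_i$ with colimit cocone $c_i\colon RHA_i\to RHX$.

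The key step is a local criterion: $X$ is $H$\nobreakdash-acyclic if and only if for every $\sigma\colon P\to Q$ in $\calG$, every $i\in\calK$, and every $f\colon P\to RHA_i$, some transition morphism $a\colon A_i\to A_j$ of the diagram lies in $\Trivializer_H(\sigma,A_i,f)$. The ``if'' direction is immediate: given such an $a$ with witness $g\colon Q\to RHB$ (here $B=A_j$), the composite $c_j\circ g\colon Q\to RHX$ extends $c_i\circ f$ along $\sigma$, since $c_j\circ RHa=c_i$. For the ``only if'' direction I would exploit that the domains and codomains of generating cofibrations lie in $\calM_{\lambda}$, hence are $\lambda$\nobreakdash-presentable: if $X$ is $H$\nobreakdash-acyclic, then $c_i\circ f$ extends to some $g'\colon Q\to RHX$; as $Q$ is $\lambda$\nobreakdash-presentable, $g'$ factors through a stage $RHA_k$, and after passing to a common index above $i$ and $k$ the two resulting morphisms out of the $\lambda$\nobreakdash-presentable object $P$ become equal under a further transition map. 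This yields the required transition morphism $a\colon A_i\to A_j$ in $\Trivializer_H(\sigma,A_i,f)$.

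With this criterion available for both $H_1$ and $H_2$, I would argue as follows. Let $X$ be $H_1$\nobreakdash-acyclic and fix $\sigma$, $i$, and $f_2\colon P\to RH_2A_i$. Since $*\in\calM_{\lambda}$ and $*$ is $H_2$\nobreakdash-acyclic, the morphism $A_i\to *$ belongs to $\Trivializer_{H_2}(\sigma,A_i,f_2)$, so this set is nonempty. By $\Ohkawa(H_2)\subseteq\Ohkawa(H_1)$ it equals $\Trivializer_{H_1}(\sigma',A',f_1)$ for some triple; since the morphisms in $\Trivializer_{H_2}(\sigma,A_i,f_2)$ have domain $A_i$ while those in $\Trivializer_{H_1}(\sigma',A',f_1)$ have domain $A'$, nonemptiness of the common set forces $A'=A_i$. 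Applying the local criterion to the $H_1$\nobreakdash-acyclic object $X$ and the triple $(\sigma',i,f_1)$ produces a transition morphism $a\colon A_i\to A_j$ in $\Trivializer_{H_1}(\sigma',A_i,f_1)=\Trivializer_{H_2}(\sigma,A_i,f_2)$. As $\sigma$, $i$, $f_2$ were arbitrary, the ``if'' part of the local criterion for $H_2$ shows that $X$ is $H_2$\nobreakdash-acyclic, giving $\Acyclic(H_1)\subseteq\Acyclic(H_2)$.

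The step I expect to be the main obstacle is the domain-matching argument: the hypothesis $\Ohkawa(H_2)\subseteq\Ohkawa(H_1)$ only compares the collections $\Trivializer$ as \emph{sets of morphisms}, so to transport an $H_2$\nobreakdash-triple to an $H_1$\nobreakdash-triple based at the \emph{same} object $A_i$ one must know the equated sets are nonempty. This is exactly where the assumption that the terminal object is $H_2$\nobreakdash-acyclic enters, and it is the load-bearing hypothesis of the theorem; the remaining difficulty is the careful cofinality bookkeeping in the ``only if'' direction of the local criterion, which relies throughout on the $\lambda$\nobreakdash-presentability of the objects of $\calM_{\lambda}$.
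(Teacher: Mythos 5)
Your proposal is correct and takes essentially the same route as the paper's proof: both reduce acyclicity to the right lifting property of $RHX\to *$ against $\calG$, factor maps from the $\lambda$\nobreakdash-presentable domains and codomains of generating cofibrations through stages of the $\lambda$\nobreakdash-filtered colimit, use the $H_2$\nobreakdash-acyclicity of the terminal object to force the domain-matching $A'=A_i$, and equalize the two maps out of $P$ (resp.\ $U$) by a further transition morphism. The only difference is organizational: you extract the two halves as a symmetric ``local criterion'' applied once to $H_1$ and once to $H_2$, whereas the paper inlines both steps in a single chain of diagrams.
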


\begin{proof}
Let $X$ be $H_1$\nobreakdash-acyclic. In order to prove that
$X$ is $H_2$\nobreakdash-acyclic, we need to show that for every $\sigma\colon P\to Q$ in $\calG$
and every $f\colon P\to RH_2X$ there is a morphism $g\colon Q\to RH_2X$ such that
$g\circ\sigma=f$. 

Write $X\cong\colim_{\calK}\,D$ for a diagram $D\colon\calK\to\calM$ where 
$\calK$ is $\lambda$\nobreakdash-filtered and $Dk$ is in $\calM_{\lambda}$ for all $k\in\calK$.
Then $H_1X\cong\colim_{\calK}\,(H_1\circ D)$ and $H_2X\cong\colim_{\calK}\,(H_2\circ D)$. 
Suppose given $f\colon P\to RH_2X$ for a morphism $\sigma\colon P\to Q$ in~$\calG$. 
Since $P$ is $\lambda$\nobreakdash-presentable, $f$ factors as
\[
\xymatrix{
P \ar[r]^-{f'} & RH_2Dk \ar[rr]^{RH_2\delta_k} & & RH_2X
}
\]
for some $k\in\calK$, where $\delta_k\colon Dk\to X$ denotes the corresponding cocone morphism. 
Thus, we may consider the set $\Trivializer_{H_2}(\sigma,Dk,f')$ in $\Ohkawa(H_2)$, which is nonempty
since $Dk\to *$ is in it, as $H_2(*)$ is contractible.

By assumption, $\Trivializer_{H_2}(\sigma,Dk,f')$ is then a member of $\Ohkawa(H_1)$, so there 
is an object $A\in\calM_{\lambda}$ and there are morphisms
$\tau\colon U\to V$ in $\calG$ and $u\colon U\to RH_1A$ such that 
\begin{equation}
\label{trivializers}
\Trivializer_{H_2}(\sigma,Dk,f')=\Trivializer_{H_1}(\tau,A,u).
\end{equation}
This forces, by definition, that $A=Dk$.

Since $H_1X$ is contractible, the morphism $RH_1X\to *$ is a trivial fibration
and hence there is a morphism $v\colon V\to RH_1X$ such that $v\circ\tau=RH_1\delta_k\circ u$.
Since $V$ is $\lambda$\nobreakdash-presentable, there is an object $k'\in\calK$
such that $v$ factors as
\[
\xymatrix{
V \ar[r]^-{w} & RH_1Dk' \ar[rr]^-{RH_1\delta_{k'}} & & RH_1X.
}
\]
Since $\calK$ is filtered,
there is an object $k''\in\calK$ together with morphisms $\alpha\colon k\to k''$ and $\beta\colon k'\to k''$.
Furthermore, since $U$ is $\lambda$\nobreakdash-presentable and
\[
RH_1\delta_{k''}\circ RH_1D\alpha\circ u = RH_1\delta_{k''}\circ RH_1D\beta\circ w\circ\tau,
\]
there is an object $k'''\in\calK$ and a morphism $\gamma\colon k''\to k'''$ such that the two composites
\[
\xymatrix{
U \ar[r]^-{u} & RH_1Dk \ar[rr]^-{RH_1D(\gamma\circ\alpha)} & & RH_1Dk'''
}
\]
and
\[
\xymatrix{
U \ar[r]^-{\tau} & V \ar[r]^-{w} & RH_1Dk' \ar[rr]^-{RH_1D(\gamma\circ\beta)} & & RH_1Dk'''
}
\]
coincide. Then $D(\gamma\circ\alpha)$ is in $\Trivializer_{H_1}(\tau,Dk,u)$
and therefore, by~\eqref{trivializers}, it is also in $\Trivializer_{H_2}(\sigma,Dk,f')$,
which means that the composite
\[
\xymatrix{
P \ar[r]^-{f'} & RH_2Dk \ar[rr]^-{RH_2D(\gamma\circ\alpha)} & & RH_2Dk'''
}
\]
factors through $\sigma\colon P\to Q$. Hence $f\colon P\to RH_2X$ also factors through $\sigma$ and this fact concludes the proof.
\end{proof}

\section{Consequences}
\label{consequences}

\begin{corollary}
\label{aset}
If a model category $\calM$ admits a $\lambda$\nobreakdash-combinatorial structure for a regular cardinal~$\lambda$, then there is only a set of distinct classes $\Acyclic(H)$ where $H$ runs over all functors 
$\calM\to\calM$ that preserve $\lambda$\nobreakdash-filtered colimits and such that
the terminal object is $H$\nobreakdash-acyclic.
\end{corollary}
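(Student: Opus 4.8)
The plan is to use Theorem~\ref{mainthm} in order to replace the a~priori proper collection of acyclic classes by a genuine set of combinatorial invariants, namely the sets $\Ohkawa(H)$.

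First I would check that the invariants $\Ohkawa(H)$ range over a fixed set, independently of the particular functor~$H$. By construction, every element of a trivializer set $\Trivializer_H(\sigma,A,f)$ is a morphism $t\colon A\to B$ with both $A$ and $B$ in the chosen set~$\calM_{\lambda}$. Since $\calM_{\lambda}$ is a set and $\calM$ is locally presentable, the collection $\mathcal{S}$ of all morphisms whose domain and codomain belong to $\calM_{\lambda}$ is itself a set. Hence each $\Trivializer_H(\sigma,A,f)$ is an element of the power set $\mathcal{P}(\mathcal{S})$, so $\Ohkawa(H)$ is a subset of $\mathcal{P}(\mathcal{S})$ and therefore an element of $\mathcal{P}(\mathcal{P}(\mathcal{S}))$. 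Consequently, as $H$ runs over all endofunctors preserving $\lambda$\nobreakdash-filtered colimits, the totality of values $\Ohkawa(H)$ is a subcollection of the single set $\mathcal{P}(\mathcal{P}(\mathcal{S}))$, and hence a set.

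The second step is to show that, among the admissible functors (those preserving $\lambda$\nobreakdash-filtered colimits and having the terminal object acyclic), the class $\Acyclic(H)$ depends only on~$\Ohkawa(H)$. Suppose $H_1$ and $H_2$ are admissible and $\Ohkawa(H_1)=\Ohkawa(H_2)$. Applying Theorem~\ref{mainthm} to the inclusion $\Ohkawa(H_2)\subseteq\Ohkawa(H_1)$ yields $\Acyclic(H_1)\subseteq\Acyclic(H_2)$, while interchanging the roles of $H_1$ and $H_2$ and using that the terminal object is also $H_1$\nobreakdash-acyclic yields $\Acyclic(H_2)\subseteq\Acyclic(H_1)$; hence $\Acyclic(H_1)=\Acyclic(H_2)$.

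It then remains to assemble these observations. By the second step, two admissible functors with the same invariant $\Ohkawa(H)$ have the same acyclic class, so the distinct classes $\Acyclic(H)$ are indexed by the set of values $\Ohkawa(H)$ through the well-defined assignment $\Ohkawa(H)\mapsto\Acyclic(H)$. As this index set is a set by the first step, there is only a set of such classes, as required. I expect the one genuine subtlety to be foundational: each $\Acyclic(H)$ may itself be a proper class, so the assertion that these classes form a set has to be read as saying that they are indexed by the set of invariants produced above; the entire analytic content is already carried by Theorem~\ref{mainthm}.
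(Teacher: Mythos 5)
Your proposal is correct and follows essentially the same route as the paper: the paper also deduces from Theorem~\ref{mainthm} that distinct acyclic classes force distinct sets $\Ohkawa(H)$ (your step two is the contrapositive of this), and then bounds all possible $\Ohkawa(H)$ inside the double power set of the union of $\calM(A,B)$ for $A,B\in\calM_{\lambda}$. Your explicit remark on the well-definedness of $\Ohkawa(H)\mapsto\Acyclic(H)$ and on the set-theoretic reading of the statement is a slightly more careful phrasing of the same argument, which the paper compresses into a proof by contradiction.
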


\begin{proof}
Suppose that there is a proper class of functors $H_{i}$
preserving $\lambda$\nobreakdash-filtered colimits,
such that the classes $\Acyclic(H_{i})$ are all distinct and contain the terminal object. 
Then, by Theorem~\ref{mainthm}, after any choice of a $\lambda$\nobreakdash-combinatorial structure on $\calM$ the sets
$\Ohkawa(H_{i})$ will be distinct. This is impossible, since all sets $\Ohkawa(H_{i})$
are contained in the power set of the union of $\calM(A,B)$ for all $A,B\in\calM_{\lambda}$, where $\calM_{\lambda}$ denotes the chosen set of representatives of isomorphism classes of $\lambda$\nobreakdash-presentable objects in~$\calM$.
\end{proof}

Observe that this argument yields a bound on the cardinality of the set
of distinct classes $\Acyclic(H)$ for each sufficiently large regular cardinal~$\lambda$, namely
$2^{2^{\kappa}}$ where $\kappa$ is the cardinality of the set of all morphisms between objects of~$\calM_{\lambda}$. 

As pointed out in \cite{DP}, the cardinality of the set of homological acyclic classes in the homotopy category of spectra is bounded above by $2^{2^{\aleph_0}}$, since there are only countably many isomorphism classes of finite spectra.
Homological acyclic classes of spectra form a lattice, whose precise size is not known. Its cardinality is at least $2^{\aleph_0}$, since distinct sets of primes $J$ yield distinct acyclic classes represented by Moore spectra $M\mathbb{Z}[J^{-1}]$. 
Another set of distinct homological acyclic classes of spectra of cardinality $2^{\aleph_0}$ was displayed in~\cite[Lemma~3.4]{DP}, namely those represented by $\bigvee_{n\in A}K(n)$ for every subset $A$ of $\NN\cup\{\infty\}$. Lattices of homological acyclic classes have been calculated in several localized categories of spectra, including the harmonic category; see~\cite{W}.

\begin{corollary}
\label{bset}
If $\calM$ is a pointed combinatorial model category, 
then there is only a set of distinct classes $\Acyclic(H)$ where 
$H\colon\calM\to\calM$ has a right adjoint.
\end{corollary}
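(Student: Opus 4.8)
The plan is to deduce this from Corollary~\ref{aset} by checking, for every functor $H$ with a right adjoint, the two hypotheses of that corollary: that $H$ preserve $\lambda$-filtered colimits and that the terminal object be $H$-acyclic. First I would apply Lemma~\ref{suff} to fix one regular cardinal $\lambda$ for which $\calM$ carries a $\lambda$-combinatorial structure. The key feature of this choice is that it is independent of $H$; moreover, since $\calM$ is pointed its terminal object is a zero object, so condition~(iv) of Lemma~\ref{suff} (that the terminal object be $\lambda$-presentable) is automatic, as remarked after the definition of a $\lambda$-combinatorial structure.

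Next I would note that a functor admitting a right adjoint is a left adjoint and hence preserves all colimits, in particular $\lambda$-filtered ones. Because left adjoints preserve colimits of every shape, the single cardinal $\lambda$ chosen above works uniformly for the entire collection of such functors at once, which is what allows the reduction to a statement about one fixed $\lambda$-combinatorial structure.

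Finally I would verify $H$-acyclicity of the terminal object. Since $\calM$ is pointed, the terminal object $*$ is also initial, i.e.\ it is the colimit of the empty diagram; a left adjoint preserves this colimit, so $H(*)\cong *$ and the identity $H(*)\to *$ is a weak equivalence. Thus $*$ is $H$-acyclic. Having confirmed both hypotheses, the functors with a right adjoint form a subcollection of those covered by Corollary~\ref{aset}, so the distinct classes $\Acyclic(H)$ among them form a subset of a set, hence a set.

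I do not anticipate a genuine obstacle in this argument; the content lies entirely in the two elementary observations that ``has a right adjoint'' implies preservation of $\lambda$-filtered colimits and that pointedness forces $H(*)\cong *$. The only point deserving explicit care is the uniformity of $\lambda$: because every left adjoint preserves all colimits, no enlargement of $\lambda$ is ever needed, which is precisely what legitimizes quantifying over all such $H$ while keeping the $\lambda$-combinatorial structure fixed.
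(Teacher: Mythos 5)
Your proposal is correct and is essentially identical to the paper's own proof: both reduce to Corollary~\ref{aset} by observing that a functor with a right adjoint preserves all colimits, hence $\lambda$-filtered colimits for a uniformly chosen $\lambda$, and preserves the initial object, which equals the terminal object by pointedness, giving $H$-acyclicity of~$*$. Your write-up merely spells out these two observations in more detail than the paper does.
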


\begin{proof}
Left adjoints preserve all colimits and, in particular, the initial object (which is also terminal, since $\calM$ is pointed).
Hence, we may pick a regular cardinal $\lambda$ such that $\calM$ admits a $\lambda$\nobreakdash-combin\-atorial structure and the result follows from Corollary~\ref{aset}.
\end{proof}

Let $\calM$ be a monoidal model category in the sense of \cite[\S 4.2]{H}, so we tacitly assume that it is closed, but not necessarily symmetric.
For an object $E$ of~$\calM$, the \emph{Bousfield class} $\langle E\rangle$ is the class of all objects $X$
such that the derived tensor product of $E$ and $X$ is isomorphic to the terminal object in the homotopy category $\Ho(\calM)$. 
Thus, the following statement generalizes Ohkawa's theorem.

\begin{corollary}
\label{bousfield}
If $\calM$ is a pointed combinatorial monoidal model category,
then there is only a set of distinct Bousfield classes in~$\Ho(\calM)$.
\end{corollary}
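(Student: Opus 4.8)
The plan is to deduce Corollary~\ref{bousfield} from Corollary~\ref{bset} by exhibiting, for each object $E$ of~$\calM$, an endofunctor $H_E\colon\calM\to\calM$ that has a right adjoint and whose acyclic class $\Acyclic(H_E)$ coincides with the Bousfield class $\langle E\rangle$. The natural candidate is the tensor functor $H_E = E\otimes(-)$. First I would invoke the closed monoidal hypothesis: since $\calM$ is closed, the functor $E\otimes(-)$ has an internal-hom right adjoint, so Corollary~\ref{bset} applies and guarantees only a set of distinct classes $\Acyclic(E\otimes(-))$ as $E$ ranges over all objects of~$\calM$. The core of the argument is then to match these point-set acyclic classes with the homotopical Bousfield classes defined in $\Ho(\calM)$.

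The step that requires care is the passage between the strict tensor product $E\otimes X$ and the derived tensor product that defines $\langle E\rangle$. The functor $E\otimes(-)$ is a left Quillen functor only after restricting to cofibrant arguments, so $\Acyclic(E\otimes(-))$ literally consists of objects $X$ with $E\otimes X$ weakly equivalent to the terminal object, whereas $\langle E\rangle$ concerns $QE\otimes QX$ for cofibrant replacements. To bridge this I would replace $H_E$ by the composite $E' \otimes Q(-)$, where $E'$ is a fixed cofibrant replacement of $E$ and $Q$ is the chosen cofibrant replacement functor preserving $\lambda$\nobreakdash-filtered colimits supplied by the $\lambda$\nobreakdash-combinatorial structure. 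Both $E'\otimes(-)$ and $Q$ have right adjoints (the former by closedness, the latter because cofibrant replacement functors arising from functorial factorizations in a combinatorial setting are accessible, hence part of an adjunction, or alternatively $Q$ preserves all colimits so preserves $\lambda$\nobreakdash-filtered colimits as required by Corollary~\ref{aset}), and their composite therefore satisfies the hypotheses needed. With this choice, $X$ is $H_E$\nobreakdash-acyclic precisely when $E'\otimes QX\simeq *$, which is exactly the condition $\langle E\rangle$ imposes once one checks that $E'\otimes QX$ represents the derived tensor product.

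I would then verify that distinct Bousfield classes force distinct acyclic classes: if $\langle E\rangle = \langle F\rangle$ we want the corresponding functors to have equal acyclic classes, and conversely. Since $\langle E\rangle$ depends only on the isomorphism class of $E$ in $\Ho(\calM)$, and weakly equivalent objects yield weakly equivalent derived tensor products, the assignment $\langle E\rangle \mapsto \Acyclic(H_E)$ is well defined and injective on Bousfield classes. Because $\pointed$ness ensures the terminal object is also initial, and the derived tensor product of $E$ with the terminal object is terminal, the terminal object lies in every $\Acyclic(H_E)$, so the hypothesis of Corollary~\ref{bset} is met.

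The main obstacle I anticipate is purely homotopical bookkeeping rather than the set-theoretic heart of the matter: one must confirm that the strict point-set functor built from $E'\otimes Q(-)$ genuinely computes the derived tensor product on all objects (not merely cofibrant ones) and that it still preserves $\lambda$\nobreakdash-filtered colimits after composing with~$Q$. The colimit-preservation of $Q$ is the delicate point, since a general fibrant--cofibrant replacement need not preserve filtered colimits; this is precisely why the $\lambda$\nobreakdash-combinatorial structure of Lemma~\ref{suff}(iii) was arranged to furnish replacement functors preserving $\lambda$\nobreakdash-filtered colimits. Granting that, the tensor functor preserves arbitrary colimits as a left adjoint, so the composite does too, and the hypotheses of Corollary~\ref{aset} are satisfied; the conclusion that Bousfield classes form a set then follows immediately.
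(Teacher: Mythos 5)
Your proposal is correct and takes essentially the same route as the paper, whose proof defines $H_EX=QE\wedge QX$ for the chosen cofibrant replacement functor~$Q$, notes that $H_E$ preserves $\lambda$\nobreakdash-filtered colimits (the tensor factor being a left adjoint and $Q$ preserving $\lambda$\nobreakdash-filtered colimits), identifies $\Acyclic(H_E)=\langle E\rangle$, and applies Corollary~\ref{aset}. The only blemish is your side-claim that $Q$ has a right adjoint (via accessibility) or preserves all colimits---both false for cofibrant replacement functors in general---but your argument does not ultimately depend on this, since in the end you invoke exactly the correct hypothesis, namely $\lambda$\nobreakdash-filtered colimit preservation and Corollary~\ref{aset} rather than Corollary~\ref{bset}, which is what the paper does.
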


\begin{proof}
Let $\lambda$ be a regular cardinal such that $\calM$ has a $\lambda$\nobreakdash-combinatorial structure and let
$Q$ be the chosen cofibrant replacement functor
that preserves $\lambda$\nobreakdash-filtered colimits on~$\calM$.
For each object~$E$, consider the functor $H_E\colon\calM\to\calM$ defined as $H_EX=QE\wedge QX$. Then $H_E$
preserves $\lambda$\nobreakdash-filtered colimits
for all~$E$, since the functor $QE\wedge (-)$ has a right adjoint ${\rm Hom}_{\ell}(QE,-)$ and hence it preserves all colimits, including the zero object.
Moreover, the Bousfield class $\langle E\rangle$ is equal to $\Acyclic(H_E)$, as $QE\wedge QX$ represents the derived tensor product of $E$ and~$X$.
Since, by Corollary~\ref{aset}, there is only a set of distinct
classes $\Acyclic(H)$ where $H$ preserves $\lambda$\nobreakdash-filtered colimits
and the zero object, the claim follows.
\end{proof}

\begin{corollary}
\label{derived}
For every commutative ring $R$ there is only a set of distinct Bousfield classes in the derived category $\calD(R)$.
\end{corollary}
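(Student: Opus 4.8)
The plan is to realize the derived category $\calD(R)$ as the homotopy category of a pointed combinatorial monoidal model category and then invoke Corollary~\ref{bousfield}. The natural candidate is the category $\Ch(R)$ of unbounded chain complexes of $R$\nobreakdash-modules, equipped with the projective model structure, which is listed in the excerpt (see \cite[\S\,2.3]{H}) as a standard example of a combinatorial model category. Since $R$ is commutative, the tensor product $\otimes_R$ of chain complexes furnishes a symmetric monoidal structure, with internal hom $\mathrm{Hom}_R(-,-)$ providing the closed structure, so $\Ch(R)$ is a monoidal model category in the sense of \cite[\S\,4.2]{H}. The zero complex is simultaneously initial and terminal, so the model category is pointed.

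The key steps, in order, are as follows. First, I would recall that $\Ch(R)$ is locally presentable: it is the category of modules over a small additive category (equivalently, a category of $R$\nobreakdash-modules internal to graded objects), hence locally $\aleph_0$\nobreakdash-presentable, and it is cofibrantly generated, so combinatorial. Second, I would verify that $\Ch(R)$ is pointed, which is immediate since the zero complex is a zero object. Third, I would confirm that $(\Ch(R),\otimes_R,\mathrm{Hom}_R)$ satisfies the pushout-product and unit axioms making it a monoidal model category; this is classical for the projective model structure on chain complexes over a commutative ring. Fourth, having assembled these ingredients, I would apply Corollary~\ref{bousfield} directly to conclude that there is only a set of distinct Bousfield classes in $\Ho(\Ch(R))$. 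Finally, I would identify $\Ho(\Ch(R))$ with the derived category $\calD(R)$, which is the defining construction of $\calD(R)$ as the localization of $\Ch(R)$ at the quasi-isomorphisms, and note that the derived tensor product $\otimes^{\mathbb{L}}_R$ computed via cofibrant (e.g.\ K\nobreakdash-projective) replacement agrees with the monoidal structure on the homotopy category, so that the Bousfield classes of Corollary~\ref{bousfield} coincide with the usual Bousfield classes in $\calD(R)$ defined through $\otimes^{\mathbb{L}}_R$.

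The main obstacle I anticipate is purely a matter of citation and identification rather than new mathematics: one must be careful to invoke the \emph{unbounded} projective model structure on $\Ch(R)$, since Corollary~\ref{bousfield} requires a genuine combinatorial \emph{monoidal} model category whose homotopy category is all of $\calD(R)$, and the monoidal axioms on unbounded complexes are slightly more delicate than on nonnegatively graded complexes. It is worth recording explicitly that cofibrant objects in this structure are retracts of complexes of projectives with a suitable filtration (K\nobreakdash-projective or dg\nobreakdash-projective complexes), so that $QE\otimes_R QX$ genuinely represents $E\otimes^{\mathbb{L}}_R X$ in $\Ho(\Ch(R))=\calD(R)$; this is exactly the compatibility needed to match the definition of $\langle E\rangle$ used in Corollary~\ref{bousfield} with the classical one. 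Once this identification is in place, the corollary follows with no further argument.
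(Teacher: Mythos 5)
Your proposal is correct and follows essentially the same route as the paper: both realize $\calD(R)$ as the homotopy category of $\Ch(R)$ with the standard (projective) model structure on unbounded complexes, cite that this structure is combinatorial and, for commutative $R$, symmetric monoidal (the paper points to \cite[Theorem~2.3.11]{H} and \cite[Proposition~4.2.13]{H}), and then apply Corollary~\ref{bousfield}. The extra care you take in identifying $\otimes^{\mathbb{L}}_R$ with the derived tensor product computed by cofibrant replacement is sound but is exactly the compatibility the paper leaves implicit.
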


\begin{proof}
For every ring $R$,
the category $\calD(R)$ is the homotopy category of the model category $\Ch(R)$ of unbounded chain complexes of $R$\nobreakdash-modules with the standard model structure \cite[Definition~2.3.3]{H}. This structure is combinatorial \cite[Theorem~2.3.11]{H} and it is symmetric monoidal if the ring $R$ is commutative \cite[Prop\-osi\-tion~4.2.13]{H}.
\end{proof}

According to \cite[IV.2]{EKMM} or \cite[Theorem~5.1.6]{SS2}, the category $\calD(R)$ is equivalent to the homotopy category of (strict) $HR$\nobreakdash-module spectra for each commutative ring $R$, where $HR$ denotes the Eilenberg--Mac\,Lane spectrum of ordinary cohomology with coefficients in~$R$. Thus, the following result extends Corollary~\ref{derived}. By a commutative ring spectrum we mean a commutative monoid in the category of symmetric spectra over simplicial sets~\cite{HSS}.

\begin{corollary}
\label{Emodules}
For every commutative ring spectrum $E$ there is only a set of distinct Bousfield classes in the homotopy category of $E$\nobreakdash-module spectra.
\end{corollary}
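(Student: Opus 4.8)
The plan is to reduce this to Corollary~\ref{bousfield} by exhibiting a pointed combinatorial monoidal model category whose homotopy category is the homotopy category of $E$\nobreakdash-module spectra. The key observation is that $E$\nobreakdash-modules, for a commutative monoid $E$ in symmetric spectra, form such a category. First I would recall that the category of symmetric spectra over simplicial sets is a combinatorial, symmetric monoidal model category satisfying the monoid axiom; this is established in~\cite{HSS} and the underlying category is locally presentable. Next, for a commutative monoid $E$ in this category, I would invoke the standard result of Schwede and Shipley~\cite[Theorem~4.1]{SS} that the category of $E$\nobreakdash-modules inherits a cofibrantly generated model structure, with weak equivalences and fibrations created by the forgetful functor.

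The second step is to verify that this inherited structure is \emph{combinatorial} and \emph{monoidal} in the sense required by Corollary~\ref{bousfield}. Combinatoriality follows because the category of $E$\nobreakdash-modules is again locally presentable: it is the category of algebras for an accessible monad (the free $E$\nobreakdash-module monad $E\wedge(-)$) on a locally presentable category, and such categories of algebras are locally presentable by \cite[Section~2]{AR} or the results cited there. For the monoidal structure, I would use that since $E$ is \emph{commutative}, the category of $E$\nobreakdash-modules carries a relative smash product $\wedge_E$ making it symmetric monoidal and closed, and that this closed monoidal structure is compatible with the model structure; the relevant statements are again in~\cite{SS}. In particular $E$\nobreakdash-modules form a monoidal model category in the sense of \cite[\S 4.2]{H}.

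The third step is simply to confirm that $E$\nobreakdash-modules are pointed: the zero module is simultaneously initial and terminal, so the category has a zero object. Having checked these three conditions, I would apply Corollary~\ref{bousfield} directly to the model category of $E$\nobreakdash-modules, concluding that there is only a set of distinct Bousfield classes in its homotopy category, which is exactly the homotopy category of $E$\nobreakdash-module spectra.

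The main obstacle I expect is the verification in the second step that the inherited model structure is genuinely monoidal, i.e., that the pushout-product axiom and unit axiom hold for $\wedge_E$ relative to the inherited cofibrations and weak equivalences. This is not automatic from the existence of the model structure and the closed monoidal structure separately; it requires the interaction recorded in~\cite{SS}, which in turn relies on the monoid axiom and the cofibrancy of the unit $E$ (or a cofibrant replacement argument). Everything else—local presentability and the existence of the zero object—is essentially formal once the cited results are in hand.
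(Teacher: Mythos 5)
Your proposal is correct and follows essentially the same route as the paper: the paper's proof simply cites \cite[Theorem~4.1]{SS} for the fact that $E$\nobreakdash-modules form a combinatorial symmetric monoidal model category and then applies Corollary~\ref{bousfield}. Your additional verifications (local presentability via the accessible monad $E\wedge(-)$, the pushout-product and unit axioms for $\wedge_E$, and pointedness) are exactly the details the paper leaves implicit in that citation.
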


\begin{proof}
Modules over a commutative ring spectrum $E$ admit a symmetric monoidal model category structure which is combinatorial; see \cite[Theorem~4.1]{SS} for details.
\end{proof}

Let $S$ be a Noetherian scheme of finite Krull dimension and denote by ${\rm Sm}/S$ the category of smooth schemes of finite type over~$S$.
Let $\Mot_S$ be the category of pointed simplicial presheaves on ${\rm Sm}/S$, that is, contravariant functors from ${\rm Sm}/S$ to pointed simplicial sets.
Each pointed simplicial set is viewed as a constant presheaf, and each object of ${\rm Sm}/S$ is treated as a discrete simplicial presheaf via the Yoneda embedding, with an added disjoint basepoint.

Since ${\rm Sm}/S$ is equivalent to a small category, $\Mot_S$ is locally finitely presentable by \cite[Corollary~1.54]{AR}. Moreover, as shown in \cite[\S 2]{DRO} or \cite[Theorem~1.2]{J}, the Nisnevich topology on ${\rm Sm}/S$ endows $\Mot_S$ with a proper, cofibrantly generated, monoidal model category structure (with object\-wise smash product), whose associated homotopy category is equivalent to the pointed motivic homotopy category ${\rm H}_*(S)$ of Morel--Voevodsky \cite{MV, V} over the base scheme~$S$.

The category $\Mot_S$ can be stabilized into a monoidal stable model category by considering \emph{motivic symmetric spectra} with respect to the Thom space $T=\mathbb{A}_S^1/(\mathbb{A}_S^1-\{0\})$ of the trivial line bundle over~$S$ as in~\cite{J}, or \emph{motivic $\mathbb{S}$\nobreakdash-modules} as in~\cite{Hu}, or \emph{motivic functors} as in~\cite{DRO}.
All these stable model categories are Quillen equivalent, and their homotopy categories are equivalent to the stable motivic homotopy category~$\SH$. 

It is important to make a distinction between Bousfield classes and homological acyclic classes in the motivic context. Namely, if $E$ and $X$ are motivic spectra, the reduced $E$\nobreakdash-homology groups of $X$ are defined for $p,q\in\ZZ$~as
\[
E_{p,q}(X)=\pi_{p,q}(E\wedge X)=[S_s^{p-q}\wedge S_t^q,E\wedge X],
\] 
where $S_s^1$ is the simplicial circle $\Delta^1/\partial\Delta^1$ and $S^1_t$ is the algebraic circle $\mathbb{A}_S^1-\{0\}$, and
no notational distinction is made between a motivic space and its associated suspension spectrum. The \emph{homological acyclic class} of $E$ is the class of those $X$ such that $E_{p,q}(X)=0$ for all $p$ and~$q$, while the \emph{Bousfield class} of $E$ is the class of those $X$ such that $E\wedge X=0$ in~$\SH$.  As explained in \cite[\S 9]{DI} or \cite[\S 3.2]{J}, the latter condition is equivalent to $\pi_{p,q}(U_+\wedge E\wedge X)=0$ for all $p$ and $q$ and all smooth schemes $U$ of finite type over~$S$, where $U_+$ denotes the disjoint union of $U$ and~$S$.
Hence, $E\wedge X=0$ is a stronger statement than $E_{*,*}(X)=0$. Note, however, that if the homological acyclic classes of $E$ and $F$ coincide then their Bousfield classes coincide as well. 

As we next state, motivic Bousfield classes form a set. The same result for homological acyclic classes is proved in Corollary~\ref{motivic2}.

\begin{corollary}
\label{motivic1}
For each Noetherian scheme $S$ of finite Krull dimension there is only a set of distinct Bousfield classes in the stable motivic homotopy category $\SH$ with base scheme~$S$.
\end{corollary}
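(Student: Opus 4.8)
The plan is to realize $\SH$ as the homotopy category of a pointed combinatorial monoidal model category and then apply Corollary~\ref{bousfield}. As recalled in the discussion preceding the statement, $\SH$ is the homotopy category of the model category of motivic symmetric $T$\nobreakdash-spectra over $\Mot_S$ constructed in~\cite{J}, with $T=\mathbb{A}_S^1/(\mathbb{A}_S^1-\{0\})$. This model category is stable, and hence pointed, and the smash product of symmetric spectra equips it with a closed symmetric monoidal structure. Thus the only thing left to check is that it is combinatorial, after which the conclusion is immediate from Corollary~\ref{bousfield}.

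First I would observe that the underlying category is locally presentable. Since ${\rm Sm}/S$ is essentially small, $\Mot_S$ is locally finitely presentable by \cite[Corollary~1.54]{AR}, as already noted above. Motivic symmetric $T$\nobreakdash-spectra are symmetric sequences of objects of $\Mot_S$ subject to the structure maps encoding the module action of $T$; this is a category of diagrams with values in $\Mot_S$ cut out by equational conditions, and such constructions preserve local presentability. Hence the underlying category of motivic symmetric $T$\nobreakdash-spectra is locally presentable.

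Next I would verify cofibrant generation. The projective (level) model structure on symmetric $T$\nobreakdash-spectra is combinatorial, being assembled levelwise from the combinatorial model structure on $\Mot_S$. The motivic stable model structure of~\cite{J} is obtained from the level structure by a \emph{left Bousfield localization}, and left Bousfield localizations of combinatorial model categories are again combinatorial. Therefore the model category of motivic symmetric spectra is pointed, combinatorial and monoidal, and Corollary~\ref{bousfield} applies directly to yield that there is only a set of distinct Bousfield classes in~$\SH$.

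The main obstacle is precisely this last verification that stabilization followed by the left Bousfield localization preserves combinatoriality; the remaining assertions—pointedness, the closed symmetric monoidal structure, and the identification of the homotopy category with $\SH$—are supplied directly by~\cite{J}.
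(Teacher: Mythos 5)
Your proposal is correct and follows essentially the same route as the paper: realize $\SH$ as the homotopy category of the monoidal model category of motivic symmetric $T$\nobreakdash-spectra from~\cite{J} and invoke Corollary~\ref{bousfield}. The paper's proof simply cites \cite{J} for the cofibrantly generated monoidal structure (with local presentability coming from the discussion preceding the corollary), whereas you additionally spell out why the spectrum category is locally presentable and why stabilization via left Bousfield localization preserves combinatoriality---details the paper leaves implicit, but the argument is the same.
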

\begin{proof}
As shown in \cite{J}, the category of motivic symmetric spectra admits a proper, cofibrantly generated, monoidal model category structure whose homotopy category is equivalent to~$\SH$. 
Hence, Corollary~\ref{bousfield} applies.
\end{proof}

According to \cite[Theorem~13]{NS} or \cite[Proposition~5.5]{V}, the full subcategory of compact objects in $\SH$ is countable if ${\rm Sm}/S$ is countable (where a category is called \emph{countable} if it is equivalent to a category with only countably many morphisms). This implies that, if $S$ can be covered by affine open subsets ${\rm Spec}(R_i)$ where each ring $R_i$ is countable, then the cardinality of the lattice of Bousfield classes in $\SH$ is bounded above by $2^{2^{\aleph_0}}$.
This bound also follows from tensor triangulated category arguments; cf.\,\cite[Theorem~2.3]{IK}.

\begin{corollary}
\label{ostvaer}
There is only a set of distinct Bousfield classes in the derived category ${\rm DM}(k)$ of motives over any field $k$ of characteristic zero.
\end{corollary}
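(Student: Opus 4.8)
The plan is to realize $\mathrm{DM}(k)$ as the homotopy category of a suitable combinatorial monoidal model category and then invoke Corollary~\ref{bousfield}. The key input, already signposted in the introduction, is the result of R\"ondigs--{\O}stv{\ae}r \cite{RO2} that for a field $k$ of characteristic zero the derived category of motives $\mathrm{DM}(k)$ is equivalent, as a tensor triangulated category, to the homotopy category of modules over the motivic Eilenberg--Mac\,Lane spectrum $M\ZZ$, which is a commutative monoid in motivic symmetric spectra over the base $\mathrm{Spec}(k)$.

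The steps I would carry out are as follows. First I would recall that $M\ZZ$ is a commutative ring spectrum in the sense used earlier, i.e.\ a commutative monoid in the symmetric monoidal category of motivic symmetric spectra over $\mathrm{Spec}(k)$ as set up in \cite{J}. Second, I would apply the module-category machinery: by \cite[Theorem~4.1]{SS}, the category of $M\ZZ$-module spectra carries a symmetric monoidal model structure that is combinatorial, and this is precisely the situation covered by Corollary~\ref{Emodules} (or directly by Corollary~\ref{bousfield}). Third, I would cite \cite{RO2} to identify the homotopy category of this module category with $\mathrm{DM}(k)$ as a monoidal triangulated category, so that the derived tensor products agree and hence the two notions of Bousfield class coincide. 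Fourth, since Corollary~\ref{Emodules} gives that there is only a set of Bousfield classes in the homotopy category of $M\ZZ$-modules, transporting along the equivalence yields only a set of Bousfield classes in $\mathrm{DM}(k)$.

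Concretely, the proof reduces to a single line once the identification is in place: apply Corollary~\ref{Emodules} with $E=M\ZZ$, using that the equivalence $\Ho(M\ZZ\text{-}\mathrm{Mod})\simeq\mathrm{DM}(k)$ of \cite{RO2} is monoidal and therefore carries Bousfield classes to Bousfield classes bijectively.

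The main obstacle I anticipate is verifying that the equivalence of \cite{RO2} is genuinely monoidal, so that the derived tensor product on $M\ZZ$-modules corresponds to the tensor product of motives defining the Bousfield classes in $\mathrm{DM}(k)$; without this compatibility, a set of acyclic classes on one side need not transport to a set of Bousfield classes on the other. A secondary point to check is the characteristic-zero hypothesis: the comparison theorem identifying $\mathrm{DM}(k)$ with $M\ZZ$-modules relies on resolution of singularities (equivalently the Beilinson--Lichtenbaum or motivic duality inputs available in characteristic zero), which is exactly why the statement is restricted to $\mathrm{char}(k)=0$. Both of these are standard consequences of the cited references rather than new arguments, so the real content of the corollary is the translation of the problem into the module-spectrum framework where Corollary~\ref{Emodules} already applies.
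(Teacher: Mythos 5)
Your proposal matches the paper's proof essentially step for step: both identify ${\rm DM}(k)$ with the homotopy category of modules over the motivic Eilenberg--Mac\,Lane spectrum $M\ZZ$ via \cite[Theorem~1]{RO2}, equip that module category with its symmetric monoidal model structure \cite[Proposition~38]{RO2}, note it is combinatorial by \cite[Theorem~4.1]{SS}, and then invoke Corollary~\ref{bousfield}. The only nuance is that the paper cites Corollary~\ref{bousfield} directly rather than Corollary~\ref{Emodules} --- the latter is stated for commutative monoids in symmetric spectra over \emph{simplicial sets}, whereas $M\ZZ$ is a motivic ring spectrum --- but your parenthetical ``or directly by Corollary~\ref{bousfield}'' already handles this correctly.
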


\begin{proof}
As shown in \cite[Theorem~1]{RO2},  the category ${\rm DM}(k)$ is equivalent to the homotopy category of modules over the commutative symmetric ring spectrum $M\ZZ$ that represents motivic cohomology for the given base field~$k$. According to \cite[Proposition~38]{RO2}, such modules form a symmetric monoidal model category. Since this model category is indeed combinatorial by \cite[Theorem~4.1]{SS}, we may use again Corollary~\ref{bousfield}.
\end{proof}

If $\calC$ and $\calD$ are any two categories and $\calD$ has a terminal object~$*$, then the \emph{kernel} of a functor $H\colon \calC\to\calD$ is the class of objects $X$ in $\calC$ such that $HX\cong *$.

Suppose that $\calC$ is locally $\lambda$\nobreakdash-presentable
and its terminal object is $\lambda$\nobreakdash-presentable.
Then, as mentioned in Section~\ref{prelims}, if we endow $\calC$ with the discrete model structure, the resulting model category has a $\lambda$\nobreakdash-comb\-in\-atorial structure.
For a functor
$H\colon\calC\to\calC$, the acyclic class $\Acyclic(H)$ is the kernel of~$H$.
Hence, Corollary~\ref{aset} specializes to the statement that, if $\lambda$
is a regular cardinal such that $\calC$ is locally $\lambda$\nobreakdash-presentable and its terminal object is $\lambda$\nobreakdash-presentable,
then there is only a set of distinct kernels of 
functors $\calC\to\calC$ preserving $\lambda$\nobreakdash-filtered colimits and the terminal object.
The following variant is more useful.

\begin{corollary}
\label{discrete}
Let $\calC$ and $\calD$ be locally $\lambda$\nobreakdash-presentable
categories, where $\lambda$ is a regular cardinal. Suppose that the terminal object of $\calC$ is $\lambda$\nobreakdash-pres\-ent\-able and $\calD$
has a zero object. Then there is only a set of distinct kernels of 
functors $H\colon\calC\to\calD$ that preserve $\lambda$\nobreakdash-filtered colimits and terminal objects.
\end{corollary}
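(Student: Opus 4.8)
The plan is to reduce Corollary~\ref{discrete} to the already-established Corollary~\ref{aset} by replacing the two-category situation $H\colon\calC\to\calD$ with a single endofunctor of a suitable combinatorial model category. The natural candidate is the product category $\calC\times\calD$. Since $\calC$ and $\calD$ are both locally $\lambda$-presentable, so is $\calC\times\calD$, with $\lambda$-presentable objects being pairs of $\lambda$-presentable objects; moreover its terminal object is $(*_\calC,*_\calD)$, which is $\lambda$-presentable because each factor is (the terminal object of $\calD$ is even a zero object, hence $\lambda$-presentable). Equipping $\calC\times\calD$ with the discrete model structure, as discussed in Section~\ref{prelims}, yields a $\lambda$-combinatorial structure to which Corollary~\ref{aset} applies.

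First I would, given a functor $H\colon\calC\to\calD$ preserving $\lambda$-filtered colimits and terminal objects, define an endofunctor $\widehat{H}\colon\calC\times\calD\to\calC\times\calD$ by $\widehat{H}(X,Y)=(*_\calC,HX)$, where $*_\calC$ is the terminal object of~$\calC$. Because $H$ and the constant functor preserve $\lambda$-filtered colimits, so does $\widehat H$, and because $H$ preserves terminal objects we get $\widehat{H}(*_\calC,*_\calD)=(*_\calC,H*_\calC)=(*_\calC,*_\calD)$, so the terminal object of $\calC\times\calD$ is $\widehat H$-acyclic, as required by Corollary~\ref{aset}. Next I would compute the acyclic class: in the discrete model structure weak equivalences are isomorphisms, so $(X,Y)$ is $\widehat H$-acyclic precisely when $(*_\calC,HX)\cong(*_\calC,*_\calD)$, i.e.\ when $HX\cong *_\calD$. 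Thus $\Acyclic(\widehat H)=\ker(H)\times\calD$. Since $\calD$ is a fixed category, the assignment $\ker(H)\mapsto\ker(H)\times\calD$ is injective on kernels, so distinct kernels of the functors $H$ give rise to distinct acyclic classes $\Acyclic(\widehat H)$.

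Finally I would conclude: Corollary~\ref{aset} guarantees that the functors $\widehat H$ (which preserve $\lambda$-filtered colimits and have $\widehat H$-acyclic terminal object) produce only a set of distinct classes $\Acyclic(\widehat H)$; by the injectivity just observed, there can be only a set of distinct kernels $\ker(H)$, which is the assertion.

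The main obstacle I anticipate is purely bookkeeping rather than conceptual: one must verify that $\calC\times\calD$ genuinely inherits a $\lambda$-combinatorial structure with $\lambda$-presentable terminal object, and in particular that the hypothesis ``$\calD$ has a zero object'' is what is needed so that the terminal object of the product is $\lambda$-presentable (a zero object is automatically $\lambda$-presentable, whereas a bare terminal object of $\calD$ might not be, which is exactly why the hypotheses on the two factors are asymmetric). Once that point is pinned down, the identification $\Acyclic(\widehat H)=\ker(H)\times\calD$ and the reduction to Corollary~\ref{aset} are routine.
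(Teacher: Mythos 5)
Your proof is correct, but it takes a genuinely different route from the paper. The paper does not reduce to Corollary~\ref{aset}; instead it re-runs the argument of Theorem~\ref{mainthm} directly in the asymmetric setting, defining for each $H\colon\calC\to\calD$ an Ohkawa set $\Ohkawa(H)$ whose elements are the sets $T_H(f)=\{t\in\calC(A,B)\mid B\in\calC_{\lambda},\ Ht\circ f \text{ factors through } 0\}$, where $f$ ranges over morphisms $P\to HA$ with $A\in\calC_{\lambda}$, $P\in\calD_{\lambda}$; the key observation replacing the lifting condition is that in a locally $\lambda$\nobreakdash-presentable category with zero object, $Y\cong 0$ if and only if every morphism from a $\lambda$\nobreakdash-presentable object into $Y$ factors through~$0$. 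Your reduction instead embeds the two-category situation into the endofunctor setting: you pass to $\calC\times\calD$ with the discrete model structure and the functor $\widehat H(X,Y)=(*_\calC,HX)$, and all the verifications you list do go through (in particular the constant functor at $*_\calC$ preserves $\lambda$\nobreakdash-filtered colimits because filtered categories are connected, the zero object of $\calD$ is automatically $\lambda$\nobreakdash-presentable so the terminal object of the product is $\lambda$\nobreakdash-presentable, and $\Acyclic(\widehat H)=\ker(H)\times\mathrm{Ob}(\calD)$ determines $\ker(H)$). What your approach buys is a genuine black-box reduction: Corollary~\ref{discrete} becomes a formal consequence of Corollary~\ref{aset}, with no need to revisit the proof of Theorem~\ref{mainthm}. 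What the paper's approach buys is explicitness and economy: it produces concrete invariants $T_H(f)$ for the functors $\calC\to\calD$ themselves (hence an explicit cardinality bound in terms of $\calC_\lambda$ and $\calD_\lambda$), avoids the auxiliary product category, and makes visible exactly where the zero-object hypothesis on $\calD$ is used, at the cost of the somewhat informal step ``it follows as in the proof of Theorem~\ref{mainthm}.''
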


\begin{proof}
Note that, since $\calD$ is locally $\lambda$\nobreakdash-presentable, an object $Y$ of $\calD$
is isomorphic to the zero object $0$ if and only if each morphism $P\to Y$
with $P\in\calD_{\lambda}$ factors through~$0$.
For each functor $H\colon\calC\to\calD$, consider the set $\Ohkawa(H)$
whose elements are the sets
\[
T_H(f)=\{t\in\calC(A, B) \mid \text{$B\in\calC_{\lambda}$ and $Ht\circ f$ factors through $0$}\},
\]
where $f$ runs over all morphisms $P\to HA$ in which $A\in\calC_{\lambda}$ and $P\in\calD_{\lambda}$.
Then it follows as in the proof of Theorem~\ref{mainthm} that
an equality $\Ohkawa(H_1)=\Ohkawa(H_2)$ implies that the kernels of $H_1$ and $H_2$ coincide,
if $H_1$ and $H_2$ preserve $\lambda$\nobreakdash-filtered colimits and terminal objects.
Since there is only a set of distinct sets~$\Ohkawa(H)$, the claim is proved.
\end{proof}

If $E_*$ denotes the reduced homology theory on pointed simplicial sets represented by a spectrum~$E$, then the condition $E_*(X)=0$ on a given $X$ is equivalent to $E\wedge\Sigma^{\infty}X=0$ in the homotopy category of spectra. Hence, it follows from Ohkawa's theorem that the collection of distinct homological acyclic classes of pointed simplicial sets is also a set.
This result can be inferred directly from Corollary~\ref{discrete} without passing to the category of spectra, since representable homology theories preserve $\aleph_0$\nobreakdash-filtered colimits if viewed as functors from pointed simplicial sets to graded abelian groups.

The same argument is valid in motivic homotopy theory:

\begin{corollary}
\label{motivic2}
There is only a set of distinct homological acyclic classes in the unstable motivic homotopy category and in the stable motivic homotopy category over any base scheme $S$.
\end{corollary}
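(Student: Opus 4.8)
The plan is to mimic the proof of the preceding Corollary~\ref{discrete}, applying it to suitable homology functors in both the unstable and stable motivic settings. The key observation is that a reduced homology theory represented by a motivic spectrum $E$, namely the collection of groups $E_{p,q}(X)=\pi_{p,q}(E\wedge X)$ for $p,q\in\ZZ$, can be packaged as a single functor $H_E$ from the relevant motivic category to the locally presentable category of $(\ZZ\times\ZZ)$\nobreakdash-graded abelian groups. The homological acyclic class of $E$ is then precisely the kernel of $H_E$ in the sense defined just before Corollary~\ref{discrete}, so it suffices to verify that the hypotheses of that corollary are met.

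First I would set up the target category. The category of $(\ZZ\times\ZZ)$\nobreakdash-graded abelian groups (equivalently, functors from the discrete category $\ZZ\times\ZZ$ to abelian groups) is locally $\aleph_0$\nobreakdash-presentable, being a functor category into a locally finitely presentable category, and its zero object is a genuine zero object, hence in particular $\aleph_0$\nobreakdash-presentable. Thus $\calD$ in Corollary~\ref{discrete} will be this graded category with $\lambda=\aleph_0$ (or whatever regular cardinal is forced by the source). For the source category $\calC$, I would take the pointed motivic space category $\Mot_S$ in the unstable case and the category of motivic symmetric spectra in the stable case; both are locally presentable, and since they are pointed their terminal object is a zero object and hence $\lambda$\nobreakdash-presentable automatically, so the remaining hypothesis on the terminal object of $\calC$ is satisfied.

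The substantive verification is that $H_E$ preserves $\lambda$\nobreakdash-filtered colimits. For this I would argue that $E\wedge(-)$ preserves all colimits (it is a left adjoint, via the internal hom), and then that the bigraded homotopy groups $\pi_{p,q}(-)=[S_s^{p-q}\wedge S_t^q,-]$ commute with $\lambda$\nobreakdash-filtered colimits for $\lambda$ large enough. In the stable setting this is the statement that the sphere $S_s^{p-q}\wedge S_t^q$ is a compact, hence $\lambda$\nobreakdash-presentable, object; in the unstable setting one uses that representable homology theories on pointed motivic spaces preserve $\aleph_0$\nobreakdash-filtered colimits, exactly the property invoked in the remark preceding this corollary for pointed simplicial sets. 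One must also ensure the fibrant/cofibrant replacements needed to compute the derived smash product do not disturb $\lambda$\nobreakdash-filtered colimits, which follows from Lemma~\ref{suff}(iii) after enlarging $\lambda$ if necessary.

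\textbf{The main obstacle} I anticipate is not any single deep step but rather the bookkeeping of choosing a single regular cardinal $\lambda$ that simultaneously makes $\calC$ locally $\lambda$\nobreakdash-presentable, makes all the relevant spheres $S_s^{p-q}\wedge S_t^q$ (a countable family) $\lambda$\nobreakdash-presentable, and makes the chosen replacement functors preserve $\lambda$\nobreakdash-filtered colimits. Since there are only countably many bidegrees, a countable union of presentability bounds stays below a fixed regular cardinal, so such a $\lambda$ exists by the cofinality argument already used in the proof of Lemma~\ref{suff}; I would simply cite that lemma and the closure of presentability under passage to larger cardinals. Once $\lambda$ is fixed, Corollary~\ref{discrete} applies verbatim and yields that there is only a set of distinct kernels of such functors $H_E$, which is exactly the assertion that homological acyclic classes form a set in both the unstable and stable motivic homotopy categories.
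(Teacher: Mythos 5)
Your proposal is correct and follows essentially the same route as the paper: apply Corollary~\ref{discrete} with $E_{*,*}$ viewed as a functor to bigraded abelian groups, using that smashing with (a cofibrant replacement of) $E$ is a left adjoint and that the motivic spheres are finitely presentable. Your cardinal bookkeeping is more cautious than necessary --- since $S^1_s$ and $S^1_t$ are finitely presentable and there are only countably many bidegrees, the paper simply takes $\lambda=\aleph_0$ throughout --- but this extra care is harmless and the argument goes through.
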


\begin{proof}
This follows from Corollary~\ref{discrete}, both in the stable case and in the unstable case, by viewing $E_{*,*}$ as a functor to bigraded abelian groups for each motivic spectrum~$E$. This functor preserves $\aleph_0$\nobreakdash-filtered colimits since smashing with a cofibrant replacement of $E$ has a right adjoint and the circles $S^1_s$ and $S^1_t$ are finitely presentable.
\end{proof}

Note that Corollary~\ref{motivic1} also follows from Corollary~\ref{discrete} by letting $\pi_{*,*}$ take values in the category of presheaves of bigraded abelian groups on~${\rm Sm}/S$, which is locally finitely presentable by \cite[Corollary~1.54]{AR}.

\bigskip

\end{document}